\documentclass[12pt,a4paper]{amsart}

\usepackage[T1]{fontenc}
\usepackage[utf8]{inputenc}
\usepackage{lmodern}
\usepackage[english]{babel}
\usepackage{amsmath,amssymb,amsthm,mathtools}
\usepackage{geometry}
\usepackage{microtype}
\usepackage[unicode,hidelinks]{hyperref}

\newcommand{\bP}{\mathbb P}
\newcommand{\bZ}{\mathbb Z}
\newcommand{\bF}{\mathbb F}
\newcommand{\kk}{\Bbbk}
\newcommand{\cO}{\mathcal O}
\newcommand{\Oq}{\mathcal O_q}

\DeclareMathOperator{\Ext}{Ext}
\DeclareMathOperator{\Hom}{Hom}
\DeclareMathOperator{\RHom}{RHom}
\DeclareMathOperator{\Cone}{Cone}
\DeclareMathOperator{\rk}{rk}

\theoremstyle{plain}
\newtheorem{lemma}{Lemma}
\newtheorem{proposition}{Proposition}
\newtheorem{theorem}{Theorem}

\theoremstyle{definition}

\newtheorem{remark}{Remark}

\title[Nontransitivity of braid group actions]{Nontransitivity of braid group
actions on exceptional bases in $K_0(\bP^n)$ for $n=4,6$}
\author{R.~V.~Eliseev}
\date{}

\makeatletter
\let\shorttitleWithOriginalCase\shorttitle
\def\@maketitle@hook{%
  \global\let\shorttitle\shorttitleWithOriginalCase
  \global\let\@maketitle@hook\@empty
}
\def\@settitle{%
  \begin{center}%
    \baselineskip 14\p@\relax
    \normalfont\bfseries\Large
    \@title
  \end{center}%
}
\makeatother

\begin{document}

\begin{abstract}
Let $X=\bP^n$, where $p=n+1$ is prime. Consider the orbit of the standard
numerical exceptional basis in $K_0(X)$ under mutations, sign changes, and
isometries of the Euler form. We construct an invariant of this orbit: for every basis from the orbit, the square of rank of all basic vectors art congruent to $1$
modulo $p$. For $\bP^4$ and $\bP^6$, we present explicit numerical
exceptional bases violating this congruence; hence the action is not
transitive. We also establish a stronger obstruction modulo $p^2$ and use
Polishchuk's theorem to show that no vector of the constructed bases is the
class of an exceptional object. Finally, we prove that for every prime $p$, the
category $D^b(\operatorname{Coh}\bP^{p-1}_{\mathbb C})$ does not contain a
pair of mutually orthogonal exceptional objects.
\end{abstract}

\maketitle

\section*{Introduction}

Let $X$ be a smooth projective variety over a field $\kk$ and let
$D^b(\operatorname{Coh}X)$ be the bounded derived category of coherent sheaves
on $X$. An object $E\in D^b(\operatorname{Coh}X)$ is called \emph{exceptional}
if
\[
\dim_\kk\Hom(E,E)=1,
\qquad
\Ext^k(E,E)=0 \quad \text{for } k\ne0.
\]
An ordered collection $(E_0,\dots,E_n)$ of exceptional objects is called
\emph{exceptional} if
\[
\Ext^k(E_j,E_i)=0 \quad \text{for all } k \text{ and all } i<j.
\]
An exceptional collection generating the whole category is called
\emph{full}. A classical theorem of Beilinson~\cite{Beilinson} states that
$(\cO,\cO(1),\dots,\cO(n))$ is a full exceptional collection on $\bP^n$.

The \emph{left mutation} replaces a pair $(E_i,E_{i+1})$ by the pair
$(L_{E_i}E_{i+1},E_i)$, where
\[
L_{E_i}E_{i+1}
=
\Cone\bigl(\RHom(E_i,E_{i+1})\otimes E_i\to E_{i+1}\bigr).
\]
The \emph{right mutation} replaces $(E_i,E_{i+1})$ by the pair
$(E_{i+1},R_{E_{i+1}}E_i)$, where
\[
R_{E_{i+1}}E_i
=
\Cone\bigl(E_i\to
\RHom(E_i,E_{i+1})^{\vee}\otimes E_{i+1}\bigr)[-1].
\]
Mutations define an action of the braid group on exceptional collections of
fixed length. One of the central questions of the theory is whether this
action is transitive (see, for example,~\cite{GorodentsevKuleshovHelix}), i.e.,
whether every full exceptional collection can be obtained from the standard
one by a sequence of mutations and autoequivalences of the derived category.

In this paper we study a numerical version of this question for projective
spaces. Consider the Euler form on the Grothendieck lattice $K_0(\bP^n)$:
\begin{equation}\label{eq:euler-form}
\langle x,y\rangle=\chi(x,y)
=
\sum_{i\geq0}(-1)^i\dim_\kk\Ext^i(x,y).
\end{equation}
A basis $(e_0,\dots,e_n)$ of this lattice is called a \emph{numerical
exceptional basis} if the Gram matrix of the Euler form in this basis is upper
unitriangular, i.e., $\langle e_i,e_i\rangle=1$ for all
$i$ and $\langle e_i,e_j\rangle=0$ for $i>j$. The classes of a full exceptional
collection form such a basis. However, a numerical exceptional basis need not
come from a collection of objects. At the level of $K_0$, the previous operations turn to mutations and the sign changes of basic vectors, and the isometries of the Euler form: mutations, sign changes of basis vectors,
and isometries of the Euler form. The standard basis
$([\cO],[\cO(1)],\dots,[\cO(n)])$ determines a distinguished orbit of this
action, called the \emph{standard orbit}. The question is whether the standard
orbit contains all numerical exceptional bases.

Our main result is a negative answer for $\bP^4$ and $\bP^6$. The key
observation is as follows. Suppose $p=n+1$ is prime; then every numerical
exceptional basis $(e_0,\dots,e_n)$ in the standard orbit satisfies
\[
\rk(e_i)^2\equiv1\pmod p,
\qquad i=0,\dots,n.
\]
This congruence is an obstruction to transitivity. We present explicit
numerical exceptional bases in $K_0(\bP^4)$ and $K_0(\bP^6)$ violating this
congruence for some basis vector. This proves that the action of mutations,
sign changes, and isometries of the Euler form on numerical exceptional bases
is not transitive.

We also find a second obstruction, independent of the rank invariant. Namely,
in every basis of the standard orbit, all entries above the of the Gram matrix
diagonal are nonzero modulo $p^2$. Therefore any basis from the standard orbit does not contain mutually orthogonal vectors, i.e., vectors $x,y$ such that
$\langle x,y\rangle=\langle y,x\rangle=0$. On the other hand, the counterexample
in $K_0(\bP^4)$ contains such pairs.

A.~Polishchuk~\cite[Theorem~1.3(iii)]{Polishchuk} obtained a similar
restriction for actual exceptional object on the level of $D^b(\operatorname{Coh}\bP^{p-1}_{\mathbb C})$ exceptional objects. In particular, if $p$ is prime,
then modulo $p$ the class of an exceptional object on $\bP^{p-1}_{\mathbb C}$
coincides with the class of a line bundle up to sign. It follows from this
restriction and our computations that every vector of our counterexamples is  not a
class of an exceptional object in the derived category. Moreover, we prove
that $D^b(\operatorname{Coh}\bP^{p-1}_{\mathbb C})$ does not contain a pair of
mutually orthogonal exceptional objects, i.e., exceptional objects $E,F$ such
that $\Ext^\bullet(E,F)=0$ and $\Ext^\bullet(F,E)=0$.

\subsection*{Acknowledgments}
The author is grateful to A.~L.~Gorodentsev and D.~V.~Pirozhkov for valuable
comments, useful discussions, and attention to this work.

\section{Arithmetic properties of the standard orbit}\label{sec:orbit}

Throughout this section, let $X=\bP^n$, $p=n+1$, and $K=K_0(X)$. Fix a point
$q\in X$ and put
\[
\rho(x)=\langle x,[\Oq]\rangle,
\qquad x\in K.
\]
For $X=\bP^n$, the function $\rho$ coincides with the rank: $\rho=\rk$. Since
$\rk\cO(i)=1$ for all $i$, the rank of a vector equals the sum of its
coordinates in the standard basis $([\cO],[\cO(1)],\dots,[\cO(n)])$.

\begin{lemma}\label{lem:std-mod-p}
Let $p=n+1$ be prime. For the standard collection on $\bP^n$ and
$0\le i\le j\le n$, we have
\[
\langle \cO(i),\cO(j)\rangle
=
\chi(\cO(j-i))
=
\binom{n+j-i}{j-i}.
\]
If $i<j$, then this number is divisible by $p$. Consequently, the Gram matrix
$B_{\mathrm{std}}$ of the standard collection satisfies
\[
B_{\mathrm{std}}\equiv I_{n+1}\pmod p.
\]
\end{lemma}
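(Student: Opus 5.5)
First we compute the Euler form. Since $\cO(i)$ is locally free, $\Ext^k(\cO(i),\cO(j))\cong H^k(\bP^n,\cO(j-i))$. This gives $\langle \cO(i),\cO(j)\rangle=\chi(\cO(j-i))$. For $0\le d=j-i\le n$, the standard cohomology of line bundles on $\bP^n$ (Serre) says that $H^k(\cO(d))=0$ for $k>0$ and $d\ge 0$. It also says that $H^0(\cO(d))$ is the space of degree $d$ forms in $n+1$ variables, whose dimension is $\binom{n+d}{d}$. Hence the displayed formula holds.

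Next, for divisibility, we write $d=j-i$ with $1\le d\le n=p-1$. Then
\[
\binom{n+d}{d}=\binom{p-1+d}{d}=\frac{(p+d-1)!}{d!\,(p-1)!}.
\]
The cleanest route is to note that
\[
\binom{p-1+d}{d}=\frac{p(p+1)\cdots(p+d-1)}{d!}.
\]
The numerator is divisible by $p$. The denominator $d!$ is coprime to $p$ because $d<p$ and $p$ is prime. Since the quotient is an integer, $p$ divides it. Alternatively, Lucas' theorem applies: $p-1+d=p+(d-1)$ has base-$p$ digits $(1,d-1)$, while $d$ has digits $(0,d)$, and $d>d-1$ forces the binomial to vanish mod $p$.

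Finally, the Gram matrix has entries $\langle\cO(i),\cO(j)\rangle$. Its diagonal entries are $\chi(\cO)=1$. Its entries below the diagonal vanish, since for $i>j$ the bundle $\cO(j-i)$ with $-n\le j-i<0$ is acyclic on $\bP^n$; this is just Beilinson's exceptionality. Its entries above the diagonal are $\equiv 0\pmod p$ by the previous step. Thus $B_{\mathrm{std}}\equiv I_{n+1}\pmod p$.

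There is no serious obstacle here. The only point needing care is the divisibility step, and specifically that the range $d\le n=p-1$ keeps $d!$ coprime to $p$. This is exactly where the primality of $p=n+1$ is used.
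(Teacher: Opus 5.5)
Your proof is correct and follows essentially the same route as the paper. You write $\binom{n+d}{d}$ as a ratio of factorials, observe that $p=n+1$ divides the numerator but not the denominator because $d<p$, and use exceptionality (acyclicity of $\cO(-1),\dots,\cO(-n)$) for the entries below the diagonal; your extra justification of $\chi(\cO(d))=\binom{n+d}{d}$ via line bundle cohomology, your cancellation of $(p-1)!$, and the Lucas alternative are harmless refinements of that argument.
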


\begin{proof}
Suppose $i<j$ and put $d=j-i$; then $1\le d\le n$ and
\[
\binom{n+d}{d}=\binom{n+d}{n}=\frac{(n+d)!}{d!\,n!}.
\]
The numerator $(n+d)!$ is divisible by $p=n+1$. Since $p$ is prime and
$1\le d,n<p$, the denominator $d!\,n!$ is not divisible by $p$. Therefore,
\[
\binom{n+d}{d}\equiv0\pmod p.
\]
The diagonal entries of $B_{\mathrm{std}}$ are equal to $1$. The entries below
the diagonal vanish because the standard collection is exceptional.
\end{proof}

\begin{lemma}\label{lem:mutation-rank}
Let $\mathcal E=(e_0,\dots,e_n)$ be a numerical exceptional basis such that
\[
\langle e_i,e_j\rangle\equiv\delta_{ij}\pmod p.
\]
Then a mutation of any pair preserves this congruence and does not
change the squared rank of any basis vector modulo $p$.
\end{lemma}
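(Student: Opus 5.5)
Write the mutations at the level of $K_0$. For a pair $(e_i,e_{i+1})$ the left mutation replaces it by $(L e_{i+1}, e_i)$ with
\[
L e_{i+1}=e_{i+1}-\langle e_i,e_{i+1}\rangle e_i .
\]
The right mutation replaces it by $(e_{i+1}, R e_i)$ with
\[
R e_i=e_i-\langle e_i,e_{i+1}\rangle e_{i+1}.
\]
This follows from the cones defining $L_{E_i}E_{i+1}$ and $R_{E_{i+1}}E_i$ (up to the sign conventions coming from the shift, which change only signs of basis vectors). Put $a=\langle e_i,e_{i+1}\rangle$. By hypothesis $a\equiv0\pmod p$. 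The plan is to show directly that after the mutation the new Gram matrix is still $\equiv I\pmod p$, and that the new vector has rank congruent to $\pm$ the rank of an old one modulo $p$.

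\emph{Step 1 (ranks).} Since $\rk=\rho=\langle\,\cdot\,,[\Oq]\rangle$ is linear, we have
\[
\rk(Le_{i+1})=\rk(e_{i+1})-a\,\rk(e_i)\equiv \rk(e_{i+1})\pmod p,
\]
and likewise $\rk(Re_i)\equiv\rk(e_i)\pmod p$. The remaining vectors of the basis are unchanged, only permuted. Hence the multiset of ranks modulo $p$ is preserved. In particular the squares are preserved, which also covers any sign change.

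\emph{Step 2 (Gram matrix).} For the new vector $f=e_{i+1}-ae_i$ and any other basis vector $e_k$, bilinearity gives
\[
\langle f,e_k\rangle=\langle e_{i+1},e_k\rangle-a\langle e_i,e_k\rangle\equiv\langle e_{i+1},e_k\rangle\pmod p,
\]
and similarly $\langle e_k,f\rangle\equiv\langle e_k,e_{i+1}\rangle$. So $f$ pairs with everything modulo $p$ exactly as $e_{i+1}$ does. The exact identity $\langle f,f\rangle=1$ holds because the mutated basis is again numerical exceptional; this is standard, and follows from $\langle e_{i+1},e_i\rangle=0$. Thus the new Gram matrix, computed in the new order, is congruent modulo $p$ to the old Gram matrix with rows and columns $i,i+1$ swapped. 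That matrix is $\equiv I$, since $I$ is invariant under simultaneous permutation. The right mutation is handled identically.

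I expect the only real subtlety to be bookkeeping. One must check that the reordered basis is still upper unitriangular as an exact statement, not just modulo $p$, and track the sign conventions. The exact statement is the standard fact that mutations preserve numerical exceptionality: it amounts to verifying $\langle e_i,Le_{i+1}\rangle=a-a=0$, which is the required vanishing below the diagonal for the new pair. The mod $p$ claims are immediate from $a\equiv0$.
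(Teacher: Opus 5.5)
Your proposal is correct and follows essentially the same argument as the paper: linearity of $\rk=\rho$ together with $\langle e_i,e_{i+1}\rangle\equiv0\pmod p$ gives the rank congruence, and bilinearity gives the congruence of the Gram matrix. The differences are only in presentation. You write out the left mutation, while the paper writes out the right one. You also package the Gram-matrix check as the observation that the mutated basis is congruent modulo $pK$ to the old basis with $e_i,e_{i+1}$ swapped, whereas the paper checks the entries one by one; the paper itself uses your swap observation later, in Proposition~\ref{prop:nonliftability-stable}.
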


\begin{proof}
Consider a pair $(e_i,e_{i+1})$ and put
\[
m=\langle e_i,e_{i+1}\rangle.
\]
By assumption, $m\equiv0\pmod p$. The right mutation replaces this pair by
\[
(e_{i+1},f),
\qquad\text{where }
f=e_i-me_{i+1}.
\]
A sign change replaces $f$ by $-f$ and does not affect the squared rank of $f$.
The vector $e_{i+1}$ only changes its position in the pair, so the rank of
$e_{i+1}$ does not change. For the new vector $f$, we have
\[
\rho(f)=\rho(e_i)-m\rho(e_{i+1})\equiv\rho(e_i)\pmod p.
\]
Since $\rho=\rk$, it follows that
\[
\rk(f)^2\equiv\rk(e_i)^2\pmod p.
\]
Thus the squared rank of each basis vector remains the same modulo $p$.

It remains to check that the new Gram matrix is still congruent to the identity
matrix modulo $p$. If $k\ne i,i+1$, then
\[
\langle f,e_k\rangle
=
\langle e_i,e_k\rangle-m\langle e_{i+1},e_k\rangle
\equiv0\pmod p
\]
and similarly
\[
\langle e_k,f\rangle
=
\langle e_k,e_i\rangle-m\langle e_k,e_{i+1}\rangle
\equiv0\pmod p.
\]
Moreover,
\[
\langle e_{i+1},f\rangle
=
\langle e_{i+1},e_i\rangle-m\langle e_{i+1},e_{i+1}\rangle
=-m
\equiv0\pmod p
\]
and
\[
\langle f,e_{i+1}\rangle
=
\langle e_i,e_{i+1}\rangle-m\langle e_{i+1},e_{i+1}\rangle
=0.
\]
Finally,
\[
\langle f,f\rangle
=
\langle e_i-me_{i+1},e_i-me_{i+1}\rangle
=1-m^2+m^2=1.
\]
Thus after the mutation all off-diagonal entries of the Gram matrix are still
divisible by $p$ and all diagonal entries are equal to $1$. The left (inverse)
mutation is treated in the same way.
\end{proof}

\begin{lemma}\label{lem:isometry-rank}
Let $X=\bP^n$, where $p=n+1$ is prime. Then sign changes of basis vectors and
isometries of the Euler form do not change the squared rank of any basis vector
modulo $p$.
\end{lemma}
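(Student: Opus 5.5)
Sign changes are immediate: replacing $e_i$ by $-e_i$ replaces $\rk(e_i)$ by $-\rk(e_i)$, so $\rk(e_i)^2$ is unchanged even as an integer. The substance is the case of an isometry $\phi\colon K\to K$, i.e.\ an automorphism of the lattice with $\langle\phi x,\phi y\rangle=\langle x,y\rangle$ for all $x,y$. We must show $\rk(\phi x)^2\equiv\rk(x)^2\pmod p$ for every $x\in K$.

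The plan is to characterize the rank intrinsically through the Euler form modulo $p$. Write $\langle x,y\rangle$ in the standard basis via $B_{\mathrm{std}}$. By Lemma~\ref{lem:std-mod-p}, $B_{\mathrm{std}}\equiv I\pmod p$, but this alone does not express $\rk$. Instead I would use the canonical (Serre) operator. Since the form is nondegenerate and unimodular, there is a unique $\kappa\in\operatorname{Aut}K$ with $\langle x,y\rangle=\langle y,\kappa x\rangle$. It is given by $\kappa=B^{-1}B^{T}$ in coordinates, and concretely $\kappa(x)=(-1)^n x\otimes\omega_X$, i.e.\ $\kappa[\cO(i)]=(-1)^n[\cO(i-p)]$. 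Every isometry commutes with $\kappa$, by uniqueness.

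The key step is to work with $\bar K=K\otimes\bF_p$. Twisting by $\cO(1)$ acts on $K=\bZ[h]/(h^{p})$ (with $h=1-[\cO(-1)]$) as multiplication by $(1-h)^{-1}$. Then $\cO(-p)$ corresponds to $(1-h)^p\equiv 1-h^p=1$ modulo $p$, since $h^p=0$. Hence $\kappa\equiv(-1)^n\,\mathrm{id}\pmod p$, which carries no information. I would therefore look at the unipotent part instead: $\kappa$ is $\pm$(multiplication by $(1-h)^p$) over $\bZ$, so $N=\bigl((-1)^n\kappa-1\bigr)/p$ is an integral operator commuting with all isometries. Its image mod $p$ should be a one-dimensional space detecting rank. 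Indeed $(1-h)^p-1=\sum_{k\ge1}\binom pk(-h)^k$, and dividing by $p$ gives $-h+\dots$ modulo $h^2$ and $p$ terms. I would check that $N^{n}$ reduced mod $p$ equals $c\cdot\rk(x)\,[\text{point}]$ for a unit $c$. Alternatively, $\langle x,N^{n}x\rangle\equiv c'\rk(x)^2\pmod p$, using that $\langle 1,h^n\rangle=\rk(\text{point})\cdot\ldots$; more precisely $\langle x,h^n\rangle=\rk(x)$ and $h^n\cdot x=\rk(x)h^n$.

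Granting this, let $Q(x)=\langle x,N^{n}x\rangle \bmod p$. This is built only from the form and $\kappa$, so $Q(\phi x)=Q(x)$ for every isometry $\phi$. Since $Q(x)\equiv c'\rk(x)^2$ with $c'$ a unit mod $p$, the lemma follows.

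The main obstacle is the computation that $N\equiv -h\cdot u\pmod p$ for a unit $u\in\bF_p[h]/(h^p)$. The worry is whether some term $\binom pk/p$ with small $k$ has $p$-adic cancellation; but $\binom pk/p=\frac{(p-1)!}{k!(p-k)!}$ is a unit for $1\le k\le p-1$, so the coefficient of $h$ is $-1$, a unit. Then $N^n\equiv (-1)^n u^n h^n$ and $h^n x=\rk(x)h^n$, giving the claim. Primality of $p$ is used exactly here and in the vanishing of $h^p$.
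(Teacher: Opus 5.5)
Your proof is correct, but it takes a different route from the paper's. Both proofs use the Serre operator $\kappa=S=(-1)^n m_{t^{-p}}$ and the fact that isometries commute with it. Your derivation of that fact, via uniqueness of $\kappa$ and nondegeneracy, is more direct than the paper's detour through adjoints and reflexive operators.

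The paper then works with the \emph{kernel} of the unipotent part. A lowest-degree argument gives $\ker(S-(-1)^n)=\bZ[\Oq]$, so an isometry $g$ preserves this line. Primitivity of $[\Oq]$ and integrality of $g$ then force $g[\Oq]=\pm[\Oq]$, whence $\rk(gx)=\langle gx,[\Oq]\rangle=\pm\rk(x)$ with a uniform sign.

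You instead use the \emph{top power} of the normalized nilpotent part. $N^n$ is the rank-one operator $x\mapsto(-1)^n\rk(x)[\Oq]$, so $x\mapsto\langle x,N^nx\rangle$ is a manifestly isometry-invariant quadratic form proportional to $\rk(x)^2$. Your computation is in fact exact, not just modulo $p$. Since $N=-h\,u(h)$ with $u(0)=1$, you get $N^n=(-1)^n m_{h^n}$ and $\langle x,N^nx\rangle=(-1)^n\rk(x)^2$ in $\bZ$. This recovers the paper's stronger conclusion $\rk(gx)^2=\rk(x)^2$.

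What each route buys:
\begin{itemize}
\item Yours needs no primitivity argument and works verbatim for isometries of $K\otimes\mathbb Q$.
\item The paper's yields directly the structural fact $g[\Oq]=\pm[\Oq]$. This also follows from your identity, since $(\rk\circ g)^2=\rk^2$ as quadratic forms forces $\rk\circ g=\pm\rk$.
\end{itemize}

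One correction: primality is not actually used, contrary to your closing remark. The vanishing $h^{n+1}=0$ holds for every $n$. Only the coefficient $-1$ of $h$ in $N$ matters, not whether the higher $\binom pk/p$ are units. Integrality of $N$ is irrelevant if you compute in $K\otimes\mathbb Q$. The paper's proof of this lemma likewise never uses that $p$ is prime.
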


\begin{proof}
A sign change $e_i\mapsto-e_i$ replaces $\rk(e_i)$ by $-\rk(e_i)$ and therefore
preserves $\rk(e_i)^2$.

Define the Serre operator $S\colon K\to K$ by the identity
$\langle u,w\rangle=\langle w,Su\rangle$ for all $u,w\in K$. By Serre duality,
$\chi(u,w)=(-1)^n\chi(w,u\otimes\omega)$; hence,
\[
S=(-1)^n m_{t^{-p}},
\]
where $t=[\cO(1)]$ and $m_x\colon K\to K$ denotes multiplication by $x$.
Further, to each operator $f\colon K\to K$ assign the adjoint operator $f^*$
defined by $\langle f^*u,w\rangle=\langle u,fw\rangle$. For a nonsymmetric
form, the operation $f\mapsto f^*$ is no longer an involution: a direct
computation shows that
\[
f^{**}=S^{-1}fS.
\]
Hence $f^{**}=f$ if and only if $f$ commutes with $S$. Operators $f$ such that
$f^{**}=f$ are called \emph{reflexive}; thus every reflexive operator preserves
the eigenspaces of $S$.

Let us show that on $\bP^n$ the operator $m_t$ is a single unipotent Jordan
block of size $p$; in particular, the unique eigenspace of $m_t$ is
one-dimensional. We also show that this eigenspace is spanned by the class of
a point. Indeed, put $u=t-1$. Then $u$ is nilpotent, $u^{n+1}=0$, and the class
of a point is expressed in terms of $u$ as follows:
\[
[\Oq]=(1-t^{-1})^n=t^{-n}u^n=u^n.
\]
Hence $[\Oq]$ is a primitive vector of the lattice $K$. The element $t^{-p}$ is
unipotent, and
\[
t^{-p}-1=-pu+\text{terms of degree at least }2.
\]
Suppose a nonzero element $x\in K$ has lowest $u$-degree $r<n$; then the lowest
term of the product $(t^{-p}-1)x$ is $-pa_ru^{r+1}$, where $a_r\ne0$ is the
coefficient of $u^r$ in $x$. Therefore this product is nonzero and
\[
\ker\bigl(S-(-1)^n\bigr)
=
\ker m_{t^{-p}-1}
=
\bZ\,u^n
=
\bZ[\Oq].
\]
This eigenspace is one-dimensional and its intersection with the lattice is
exactly $\bZ[\Oq]$.

Now let $g$ be an isometry of the Euler form. Rewriting the invariance of the
form as $\langle u,gw\rangle=\langle g^{-1}u,w\rangle$, we get $g^*=g^{-1}$;
the same identity for the isometry $g^{-1}$ gives $g^{**}=g$. Thus the operator
$g$ is reflexive and hence commutes with $S$. Consequently, $g$ preserves the
eigenline $\bZ[\Oq]$. Since $g$ is an automorphism of the lattice $K$, the
operator $g$ takes the primitive generator $[\Oq]$ of this line to a primitive
generator; hence $g[\Oq]=\varepsilon[\Oq]$ for some $\varepsilon=\pm1$. Then
for each $i$ we have
\[
\rk(ge_i)=\langle ge_i,[\Oq]\rangle
=\varepsilon\langle ge_i,g[\Oq]\rangle
=\varepsilon\langle e_i,[\Oq]\rangle
=\varepsilon\rk(e_i),
\]
and hence $\rk(ge_i)^2=\rk(e_i)^2$. Thus isometries preserve the squared
rank of each vector exactly, not only modulo $p$. Together with the case of sign
changes, this proves the lemma.
\end{proof}

\begin{proposition}[rank invariant]\label{prop:rank-invariant}
Let $X=\bP^n$, where $p=n+1$ is prime. Suppose a numerical exceptional basis
$\mathcal E=(e_0,\dots,e_n)$ lies in the orbit of the standard basis
\[
([\cO],[\cO(1)],\dots,[\cO(n)])
\]
under mutations, sign changes, and isometries of the Euler form; then
\[
\rk(e_i)^2\equiv1\pmod p,
\qquad i=0,\dots,n.
\]
\end{proposition}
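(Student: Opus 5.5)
The plan is to prove a stronger statement by induction on the length of a word in the generators: every basis $\mathcal E=(e_0,\dots,e_n)$ in the orbit satisfies both
\[
\langle e_i,e_j\rangle\equiv\delta_{ij}\pmod p
\qquad\text{and}\qquad
\rk(e_i)^2\equiv1\pmod p \quad\text{for all } i,j.
\]
The first condition is needed to feed Lemma~\ref{lem:mutation-rank}, so it has to be carried along even though the proposition only asserts the second. The orbit consists of bases obtained from the standard one by a finite sequence of elementary operations:
\begin{itemize}
\item left or right mutations of adjacent pairs,
\item sign changes of single vectors,
\item applications of an isometry $g$ of the Euler form to all vectors.
\end{itemize}
Inverses of these operations are again operations of the same three types: the inverse of a right mutation is a left mutation, a sign change is its own inverse, and $g^{-1}$ is an isometry. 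So it suffices to check that each elementary operation preserves both conditions.

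\emph{Base case.} For the standard basis, Lemma~\ref{lem:std-mod-p} gives $B_{\mathrm{std}}\equiv I_{n+1}\pmod p$. Moreover $\rk\cO(i)=1$, so $\rk(e_i)^2=1$.

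\emph{Inductive step.} I treat the three operation types in turn.
\begin{itemize}
\item \emph{Mutations.} Lemma~\ref{lem:mutation-rank}, including its remark on the left mutation, states exactly that the Gram congruence is preserved and that squared ranks modulo $p$ are unchanged. The one point to make explicit is that the multiset of squared ranks is carried to the new positions: $e_{i+1}$ keeps its rank, and the new vector $f$ has $\rk(f)^2\equiv\rk(e_i)^2\equiv1$. Hence every vector still has squared rank $\equiv1$.
\item \emph{Sign changes.} The change $e_i\mapsto-e_i$ multiplies the $i$-th row and column of the Gram matrix by $-1$ away from the diagonal, and leaves the diagonal entry $\langle e_i,e_i\rangle=1$ fixed. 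So the matrix is still $\equiv I$ modulo $p$, and by Lemma~\ref{lem:isometry-rank} squared ranks are unchanged.
\item \emph{Isometries.} For an isometry $g$ we have $\langle ge_i,ge_j\rangle=\langle e_i,e_j\rangle$, so the Gram matrix is literally unchanged. Lemma~\ref{lem:isometry-rank} gives $\rk(ge_i)^2=\rk(e_i)^2$.
\end{itemize}

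Since all three steps just cite earlier lemmas, I do not expect a genuine obstacle. The only step that needs care is keeping the auxiliary hypothesis $\langle e_i,e_j\rangle\equiv\delta_{ij}\pmod p$ alive through sign changes and isometries, so that Lemma~\ref{lem:mutation-rank} can be reapplied at every later mutation. This is why the induction is run on the conjunction of the two conditions rather than on the rank condition alone. Once this is done, induction on word length yields the proposition.
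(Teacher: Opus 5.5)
Your proposal is correct and matches the paper's proof. Both run an induction over the sequence of operations that carries the auxiliary congruence $\langle e_i,e_j\rangle\equiv\delta_{ij}\pmod p$ alongside the rank condition, starting from Lemma~\ref{lem:std-mod-p}, and both invoke Lemma~\ref{lem:mutation-rank} for mutations and Lemma~\ref{lem:isometry-rank} for sign changes and isometries; your remarks that the inverse operations are again allowed operations and that a sign change fixes $\langle e_i,e_i\rangle=1$ only make explicit what the paper leaves implicit.
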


\begin{proof}
By Lemma~\ref{lem:std-mod-p}, the standard basis satisfies the congruence
$B_{\mathrm{std}}\equiv I_{n+1}\pmod p$. Each vector of the standard basis has
rank $1$; hence $\rk(\cO(i))^2\equiv1\pmod p$ for all $i$.

Each of the three allowed operations preserves the congruence
$\langle e_i,e_j\rangle\equiv\delta_{ij}\pmod p$. For mutations, this is
proved in Lemma~\ref{lem:mutation-rank}. A sign change multiplies one row and
one column of the Gram matrix by $-1$. An isometry does not change the Gram
matrix at all. Therefore the assumption of Lemma~\ref{lem:mutation-rank} holds
throughout the orbit. By Lemma~\ref{lem:mutation-rank}, mutations do not
change the squared rank of any basis vector modulo $p$. By
Lemma~\ref{lem:isometry-rank}, the other two operations do not change squared
ranks modulo $p$ either.

Consequently, after any sequence of allowed operations, the squared rank of
each basis vector is still congruent to $1$ modulo $p$.
\end{proof}

\section{Stronger numerical obstruction modulo $p^2$}\label{sec:mod-p2}

Besides the rank invariant, there is another obstruction preserved throughout
the standard orbit. Suppose $p=n+1$ is prime; then the entries of the standard
Gram matrix above the diagonal are not only divisible by $p$: their $p$-adic
order is exactly one. This property is preserved by mutations.

\begin{lemma}\label{lem:std-mod-p2}
Let $p$ be a prime and let $X=\bP^{p-1}$. In the standard basis
$e_i=[\cO(i)]$, $0\le i\le p-1$, we have
\[
\langle e_a,e_{a+d}\rangle
=
\binom{p-1+d}{p-1}
\equiv p d^{-1}\pmod {p^2}
\]
for $d=1,\dots,p-1$, where $d^{-1}$ denotes the inverse of $d$ modulo $p$. In
particular, all entries of the standard Gram matrix above the diagonal are
nonzero modulo $p^2$.
\end{lemma}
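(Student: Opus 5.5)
We start from the formula of Lemma~\ref{lem:std-mod-p}, $\langle e_a,e_{a+d}\rangle=\chi(\cO(d))=\binom{n+d}{d}$ with $n=p-1$. We then write this binomial coefficient as a product that isolates the unique factor divisible by $p$:
\[
\binom{p-1+d}{p-1}=\frac{(p-1+d)!}{(p-1)!\,d!}
=\frac{p(p+1)(p+2)\cdots(p+d-1)}{d!}
=\frac{p}{d}\prod_{k=1}^{d-1}\frac{p+k}{k}.
\]
Here we used $(p-1+d)!/(p-1)!=p(p+1)\cdots(p+d-1)$, which has $d$ factors, and $d!=d\cdot(d-1)!$.

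Next we reduce modulo $p^2$. Since $1\le d\le p-1$, every $k\le d-1<p$ and $d$ itself are invertible modulo $p$, so we may work in $\bZ_{(p)}$ (the rationals with denominators prime to $p$), where congruences modulo $p^2$ make sense. Each factor $(p+k)/k=1+p/k$ is $\equiv1\pmod p$, so the product $\prod_{k=1}^{d-1}(1+p/k)$ is $\equiv1\pmod p$. Multiplying by $p/d$ then gives
\[
\binom{p-1+d}{p-1}\equiv \frac{p}{d}\equiv p\,d^{-1}\pmod{p^2}.
\]
Because the left side is an integer, this is an honest congruence of integers, with $d^{-1}$ read as an integer inverse of $d$ modulo $p$. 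Changing $d^{-1}$ by a multiple of $p$ changes $p\,d^{-1}$ by a multiple of $p^2$, so the choice of representative does not matter.

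Finally, $p\,d^{-1}\not\equiv0\pmod{p^2}$ because $d^{-1}\not\equiv0\pmod p$. This shows every entry above the diagonal is nonzero modulo $p^2$.

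The only step that needs care is justifying division by $d$ and $k$ modulo $p^2$, which is where $d<p$ is used. An equivalent argument that avoids $\bZ_{(p)}$ multiplies through: $d!\binom{p-1+d}{d}=p\prod_{k=1}^{d-1}(p+k)\equiv p\,(d-1)!\pmod{p^2}$, and then we cancel the unit $(d-1)!$ and the unit $d$ modulo $p^2$.
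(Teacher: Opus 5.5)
Your proof is correct and follows essentially the same route as the paper: your product $\prod_{k=1}^{d-1}\frac{p+k}{k}$ is exactly $\binom{p-1+d}{d-1}$, so you use the same identity $\binom{p-1+d}{d}=\frac{p}{d}\binom{p-1+d}{d-1}$ together with $\binom{p-1+d}{d-1}\equiv 1\pmod p$. Your factor-by-factor check $(p+k)/k=1+p/k\equiv 1\pmod p$ also justifies that last congruence, which the paper states without proof.
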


\begin{proof}
We have
\[
\binom{p-1+d}{p-1}
=\binom{p-1+d}{d}
=\frac{p}{d}\binom{p-1+d}{d-1}.
\]
Since
\[
\binom{p-1+d}{d-1}\equiv 1\pmod p,
\]
we obtain
\[
\frac{1}{p}\binom{p-1+d}{p-1}\equiv d^{-1}\pmod p.
\qedhere
\]
\end{proof}

\begin{proposition}\label{prop:no-zero-upper-standard-orbit}
Let $p$ be a prime and let $X=\bP^{p-1}$. Suppose a numerical exceptional basis
lies in the orbit of the standard basis under mutations, sign changes, and
isometries of the Euler form; then all entries of its Gram matrix above the
diagonal are nonzero modulo $p^2$. Consequently, any basis from this orbit does not contain two mutually orthogonal vectors.
\end{proposition}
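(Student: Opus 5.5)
The plan is to follow the strategy of Proposition~\ref{prop:rank-invariant}, but with a finer invariant: the Gram matrix $B$ of the basis satisfies $B\equiv I\pmod p$, and every entry strictly above the diagonal has $p$-adic valuation exactly one, i.e.\ is $\equiv pu\pmod{p^2}$ for some $u\not\equiv0\pmod p$. Lemma~\ref{lem:std-mod-p2}, together with Lemma~\ref{lem:std-mod-p}, shows that the standard basis satisfies this condition. It then suffices to check that each allowed operation preserves it.

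Sign changes and isometries are immediate. A sign change multiplies one row and one column of $B$ by $-1$, which preserves nonvanishing modulo $p^2$ and the congruence modulo $p$. An isometry does not change the Gram matrix at all.

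The main step is mutations. Take the right mutation of $(e_i,e_{i+1})$, replacing it by $(e_{i+1},f)$ with $f=e_i-me_{i+1}$ and $m=\langle e_i,e_{i+1}\rangle$. By the invariant, $m\equiv0\pmod p$. The key observation is that for $k\ne i,i+1$ the correction term $m\langle e_{i+1},e_k\rangle$ (respectively $m\langle e_k,e_{i+1}\rangle$) is a product of two multiples of $p$, hence divisible by $p^2$. This gives
\[
\langle f,e_k\rangle\equiv\langle e_i,e_k\rangle,
\qquad
\langle e_k,f\rangle\equiv\langle e_k,e_i\rangle
\pmod{p^2}.
\]
Now track the new positions. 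The vector $f$ occupies position $i+1$ and $e_{i+1}$ occupies position $i$. For $k<i$, the new upper entries $\langle e_k,e_{i+1}\rangle$ and $\langle e_k,f\rangle$ are congruent modulo $p^2$ to old upper entries. For $k>i+1$, the new upper entries $\langle e_{i+1},e_k\rangle$ and $\langle f,e_k\rangle$ are likewise congruent to old upper entries. The only genuinely new upper entry is $\langle e_{i+1},f\rangle=-m$, which is minus an old upper entry, hence still of valuation exactly one.

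So modulo $p^2$, the upper-triangular entries after a mutation are, up to sign, a rearrangement of the old ones. The congruence $B\equiv I\pmod p$ is preserved by Lemma~\ref{lem:mutation-rank}. The left mutation is handled symmetrically, replacing $(e_i,e_{i+1})$ by $(e_{i+1}-me_i,\,e_i)$ with the same estimate. The only care needed in this step is the bookkeeping of which entries land above the diagonal after the swap.

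By induction along any sequence of operations, every basis in the orbit has all upper entries nonzero modulo $p^2$. For the consequence, suppose two basis vectors $e_i,e_j$ with $i<j$ were mutually orthogonal. Then the upper entry $\langle e_i,e_j\rangle=0$, which contradicts the invariant.
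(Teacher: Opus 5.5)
Your proposal is correct and follows essentially the same route as the paper. The paper also propagates the property that every entry above the diagonal lies in $p\bZ\setminus p^2\bZ$, and it checks this on the standard basis via Lemma~\ref{lem:std-mod-p2}. For mutations it likewise uses that the correction term $m\langle e_{i+1},e_k\rangle$ is divisible by $p^2$ and that the new entry inside the pair is $-m$. Your explicit bookkeeping, showing that the new upper entries are a signed rearrangement of the old ones modulo $p^2$, and your carrying of $B\equiv I\pmod p$ are harmless refinements of that argument.
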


\begin{proof}
It suffices to show that the following property is preserved throughout the
orbit:
\[
\langle e_i,e_j\rangle\in p\bZ\setminus p^2\bZ
\qquad \text{for all } i<j.
\]
The standard basis has this property by Lemma~\ref{lem:std-mod-p2}.

Consider the right mutation of a pair $(e_i,e_{i+1})$ and put
$m=\langle e_i,e_{i+1}\rangle$. By assumption, $m\in p\bZ\setminus p^2\bZ$.
The new pair has the form $(e_{i+1},f)$, where
\[
f=e_i-m e_{i+1}.
\]
The new entry above the diagonal inside this pair is
\[
\langle e_{i+1},f\rangle=-m;
\]
hence this entry is also nonzero modulo $p^2$. If $k<i$, then
\[
\langle e_k,f\rangle
=
\langle e_k,e_i\rangle-m\langle e_k,e_{i+1}\rangle
\equiv
\langle e_k,e_i\rangle
\pmod {p^2}
\]
because both factors of the second term are divisible by $p$. The right-hand
side is nonzero modulo $p^2$. Similarly, if $k>i+1$, then
\[
\langle f,e_k\rangle
=
\langle e_i,e_k\rangle-m\langle e_{i+1},e_k\rangle
\equiv
\langle e_i,e_k\rangle
\pmod {p^2},
\]
and the right-hand side is again nonzero modulo $p^2$. All other entries above
the diagonal do not change. The left (inverse) mutation is treated in the same
way.

A sign change multiplies the corresponding row and column by $-1$. An isometry
does not change the Gram matrix at all. Therefore the property holds
throughout the standard orbit.

In a numerical exceptional basis, all entries below the diagonal are zero.
Hence two basis vectors are mutually orthogonal if and only if the
corresponding entry above the diagonal is zero. In the standard orbit, this is
impossible.
\end{proof}

\section{Counterexample for $\bP^4$}\label{sec:p4}

Let $X=\bP^4$. In this case $p=5$ and the Gram matrix of the standard basis
\[
([\cO],[\cO(1)],[\cO(2)],[\cO(3)],[\cO(4)])
\]
is
\[
B=
\begin{pmatrix}
1&5&15&35&70\\
0&1&5&15&35\\
0&0&1&5&15\\
0&0&0&1&5\\
0&0&0&0&1
\end{pmatrix}.
\]
Consider the matrix
\[
P=
\begin{pmatrix}
16&-283&-239&-208&-59\\
-44&888&739&656&192\\
33&-951&-766&-708&-219\\
4&344&248&261&93\\
-8&-9&8&-9&-9
\end{pmatrix}.
\]
Denote by $f_0,\dots,f_4$ the vectors of $K_0(X)$ given by the columns of $P$
in the standard basis. A direct computation gives
\[
{}^tPBP=
\begin{pmatrix}
1&-10&-10&-10&-5\\
0&1&0&0&5\\
0&0&1&5&10\\
0&0&0&1&5\\
0&0&0&0&1
\end{pmatrix},
\qquad
\det P=-1.
\]
Therefore $(f_0,\dots,f_4)$ is a numerical exceptional basis.

Since the rank of a vector is the sum of its coordinates in the standard basis,
we have
\[
(\rk f_0,\rk f_1,\rk f_2,\rk f_3,\rk f_4)
=
(1,-11,-10,-8,-2).
\]
Modulo $5$, these ranks are congruent to
\[
(1,4,0,2,3)
\]
and their squares are congruent to
\[
(1,1,0,4,4).
\]
In particular,
\[
\rk(f_2)^2\equiv0\not\equiv1\pmod5.
\]

\begin{theorem}\label{thm:p4-nontransitive}
For $\bP^4$, the action of mutations, sign changes, and isometries of the Euler
form on numerical exceptional bases is not transitive.
\end{theorem}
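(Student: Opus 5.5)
The plan is to compare the explicit basis $(f_0,\dots,f_4)$ constructed above with the rank invariant of Proposition~\ref{prop:rank-invariant}. If the action were transitive, $(f_0,\dots,f_4)$ would lie in the standard orbit, since it is a numerical exceptional basis. We will check that it is one and that it violates the invariant.

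First we verify that $(f_0,\dots,f_4)$ is a numerical exceptional basis of $K_0(\bP^4)$.
\begin{itemize}
\item The condition $\det P=-1$ shows that the columns of $P$ form a $\bZ$-basis of the lattice $K$.
\item The product ${}^tPBP$ is the Gram matrix of the Euler form in this basis.
\end{itemize}
The displayed value of ${}^tPBP$ is upper unitriangular. This is a finite integer computation, which I would recheck directly or by computer, and it is the only part needing care.

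Next we apply Proposition~\ref{prop:rank-invariant} with $n=4$ and $p=5$, which is prime. It says that every basis in the orbit of $([\cO],\dots,[\cO(4)])$ under mutations, sign changes and isometries satisfies $\rk(e_i)^2\equiv1\pmod5$ for all $i$.

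Now we compute the ranks. Since $\rk$ is the sum of coordinates in the standard basis, we sum the columns of $P$. The third column gives
\[
\rk f_2=-239+739-766+248+8=-10\equiv0\pmod5,
\]
so $\rk(f_2)^2\equiv0\not\equiv1\pmod5$. Hence $(f_0,\dots,f_4)$ does not lie in the standard orbit.

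Consequently the orbit of the standard basis is not all numerical exceptional bases, and the action is not transitive. There is no real obstacle beyond the arithmetic verification of ${}^tPBP$ and $\det P$. The conceptual content is already contained in Lemmas~\ref{lem:mutation-rank} and~\ref{lem:isometry-rank}.
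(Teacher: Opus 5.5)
Your proposal is correct and follows the paper's proof: you check that $(f_0,\dots,f_4)$ is a numerical exceptional basis via $\det P=-1$ and the upper unitriangular Gram matrix ${}^tPBP$, then apply Proposition~\ref{prop:rank-invariant} with $p=5$ and observe that $\rk f_2=-10\equiv0\pmod5$. I rechecked the arithmetic and it is correct; note also that $\det P=\pm1$ already follows from $\det({}^tPBP)=\det B=1$.
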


\begin{proof}
By Proposition~\ref{prop:rank-invariant}, every basis $(e_0,\dots,e_4)$ in the
standard orbit satisfies $\rk(e_i)^2\equiv1\pmod5$ for all $i$. The basis
$(f_0,\dots,f_4)$ constructed above violates this congruence for $i=2$.
Therefore this basis does not lie in the standard orbit; hence the action is
not transitive.
\end{proof}

The same matrix gives a second proof, independent of the rank invariant.
Namely, the Gram matrix of $(f_0,\dots,f_4)$ has zero entries above the
diagonal:
\[
\langle f_1,f_2\rangle=\langle f_2,f_1\rangle=0,
\qquad
\langle f_1,f_3\rangle=\langle f_3,f_1\rangle=0.
\]
Thus this numerical exceptional basis contains mutually orthogonal pairs of
vectors in $K_0(\bP^4)$. By Proposition~\ref{prop:no-zero-upper-standard-orbit},
any basis from the standard orbit does not contain such pairs. Hence the basis
$(f_0,\dots,f_4)$ does not lie in the standard orbit for this reason as well.

\section{Counterexample for $\bP^6$}\label{sec:p6}

A similar computation gives a counterexample for $\bP^6$. In this case $p=7$.
Let $B_6$ be the Gram matrix of the standard basis
\[
([\cO],[\cO(1)],\dots,[\cO(6)]),
\]
i.e.,
\[
(B_6)_{ij}=
\begin{cases}
\binom{6+j-i}{6}, & i\le j,\\
0, & i>j,
\end{cases}
\qquad 0\le i,j\le 6.
\]
Consider the matrix
\[
P_6=
\begin{pmatrix}
-11&-6&-14&2&0&26&-2\\
60&37&95&-19&-14&-120&10\\
-134&-65&-185&58&88&168&-18\\
146&-7&26&-76&-216&12&12\\
-82&94&196&50&243&-194&0\\
22&-74&-168&-16&-128&144&-3\\
-2&18&42&2&26&-35&1
\end{pmatrix}.
\]
Then
\[
{}^tP_6B_6P_6=
\begin{pmatrix}
1&7&49&-35&-56&686&-42\\
0&1&7&-7&-7&294&0\\
0&0&1&-7&0&574&14\\
0&0&0&1&0&-84&0\\
0&0&0&0&1&28&7\\
0&0&0&0&0&1&-21\\
0&0&0&0&0&0&1
\end{pmatrix},
\qquad
\det P_6=1.
\]
Therefore the vectors $f_0,\dots,f_6$ given by the columns of $P_6$ in the
standard basis form a numerical exceptional basis. The ranks of these vectors
are the column sums of $P_6$:
\[
(\rk f_0,\dots,\rk f_6)=(-1,-3,-8,1,-1,1,0).
\]
Modulo $7$, the squares of these ranks are congruent to
\[
(1,2,1,1,1,1,0).
\]
In particular,
\[
\rk(f_6)^2\equiv0\not\equiv1\pmod7,
\qquad
\rk(f_1)^2\equiv2\not\equiv1\pmod7.
\]
By Proposition~\ref{prop:rank-invariant}, in every basis of the standard orbit
all basis vectors have squared ranks congruent to $1$ modulo $7$.
Therefore the numerical exceptional basis $(f_0,\dots,f_6)$ does not lie in the
standard orbit either.

\section{Polishchuk's obstruction and unliftable classes}\label{sec:polishchuk}

Let us explain why the vectors constructed above should be regarded as purely
numerical classes. We use the following consequence of Polishchuk's
theorem~\cite[Theorem~1.3(iii)]{Polishchuk}.

\begin{theorem}[Polishchuk]\label{thm:polishchuk-special}
Let $p$ be a prime and let $X=\bP^{p-1}_{\mathbb C}$. If
$E\in D^b(\operatorname{Coh}X)$ is an exceptional object, then
\[
[E]\equiv \pm[\cO(a)]
\quad \text{in } K_0(X)\otimes\bF_p
\]
for some $a\in\bZ/p\bZ$.
\end{theorem}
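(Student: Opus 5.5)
This statement is labelled as a consequence of \cite[Theorem~1.3(iii)]{Polishchuk}, so the plan is to specialise Polishchuk's general statement to $X=\bP^{p-1}_{\mathbb C}$ rather than prove anything from scratch. As I recall, Polishchuk's result concerns varieties whose Grothendieck group is generated by a full exceptional collection. Part (iii) states that, for a prime $p$ with suitable numerical hypotheses, the class of any exceptional object reduced modulo $p$ lies in the orbit of the standard basis vectors under the group generated by twists and shifts. Concretely, the first step is to write down his hypothesis for $\bP^n$: the Gram matrix of $(\cO,\dots,\cO(n))$ is congruent to the identity modulo $p$. Lemma~\ref{lem:std-mod-p} verifies exactly this for $p=n+1$.

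The second step is to understand $K_0(X)\otimes\bF_p$. With $t=[\cO(1)]$ we have
\[
K_0(X)\otimes\bF_p=\bF_p[t]/(t-1)^p=\bF_p[t]/(t^p-1).
\]
Since the Gram matrix is the identity modulo $p$, the reduced Euler form is symmetric and nondegenerate, and the classes $[\cO(a)]$ form an orthonormal basis over $\bF_p$. Moreover, $[\cO(a)]$ depends only on $a$ modulo $p$, because $t^p=1$ in this ring. This justifies writing $a\in\bZ/p\bZ$ in the statement.

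The third step is to obtain the conclusion. For an exceptional object $E$, the class $x=[E]$ satisfies $\langle x,x\rangle=1$. Polishchuk's argument, which we may cite, shows that an exceptional class reduces to a vector with exactly one nonzero coordinate equal to $\pm1$ in the orthonormal basis $[\cO(a)]$. Informally, the mechanism is that exceptional objects are obtained, up to the relevant equivalences, from mutations. Mutations act modulo $p$ as permutations of the basis, since all the coefficients $m$ vanish modulo $p$, as in Lemma~\ref{lem:mutation-rank}. Polishchuk's actual argument uses Hochschild homology and the structure of $\mathbb Z/p$-actions over $\mathbb C$, not transitivity.

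The main obstacle is checking that the hypotheses of \cite[Theorem~1.3(iii)]{Polishchuk} are met literally, namely that the ground field is $\mathbb C$ and that the relevant form and lattice conventions agree. A further point is translating his formulation, which is phrased in terms of the images of standard classes, into the form ``$\pm[\cO(a)]$''. I would first state his theorem verbatim and then match the notation using the description of $K_0\otimes\bF_p$ above.
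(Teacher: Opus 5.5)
Your proposal matches the paper: there too the statement is simply imported from \cite[Theorem~1.3(iii)]{Polishchuk}, and the only addition is the remark that $[\cO(a+p)]\equiv[\cO(a)]\pmod p$ (your $t^p=1$ in $\bF_p[t]/(t-1)^p$), so that $a\in\bZ/p\bZ$ makes sense. Do not lean on the ``mutations'' heuristic, though. It presupposes that every exceptional object on $\bP^{p-1}$ is reachable by mutations, which is not known, and $\langle x,x\rangle=1$ alone does not force $x\equiv\pm e_a$, as the paper's bases in $K_0(\bP^4)$ and $K_0(\bP^6)$ show; all the content lies in the cited theorem.
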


Consider the standard basis $e_i=[\cO(i)]$, $0\le i\le p-1$. The classes of
line bundles in $K_0(\bP^{p-1})\otimes\bF_p$ are exactly the vectors
$e_0,\dots,e_{p-1}$. Indeed, modulo $p$ we have $[\cO(a+p)]=[\cO(a)]$.
Therefore, by Theorem~\ref{thm:polishchuk-special}, the class of an exceptional
object is congruent modulo $p$ to one of the vectors $\pm e_i$.

Put
\[
\mathcal L_p=\{\pm e_0,\dots,\pm e_{p-1}\}
\subset K_0(\bP^{p-1})\otimes\bF_p.
\]
We say that a vector $x\in K_0(\bP^{p-1})$ is \emph{Polishchuk-obstructed} if
the reduction of $x$ modulo $p$ does not belong to $\mathcal L_p$. By
Theorem~\ref{thm:polishchuk-special}, a Polishchuk-obstructed vector cannot be
the class of an exceptional object; we say that such a vector is
\emph{unliftable}.

\begin{proposition}[preservation of unliftability]
\label{prop:nonliftability-stable}
Let $X=\bP^{p-1}_{\mathbb C}$, where $p$ is a prime, and let
$\mathcal E=(e_0,\dots,e_{p-1})$ be a numerical exceptional basis such that
\[
\langle e_i,e_j\rangle\equiv\delta_{ij}\pmod p
\]
and each $e_i$ is Polishchuk-obstructed. Then every basis obtained from
$\mathcal E$ by numerical mutations and sign changes has the same two
properties. Every isometry $g$ of the Euler form also preserves these two
properties provided that the reduction of $g$ modulo $p$ preserves the set
$\mathcal L_p$.
Consequently, no vector of any basis in this orbit is the class of an
exceptional object of $D^b(\operatorname{Coh}X)$.
\end{proposition}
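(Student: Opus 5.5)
The plan is to verify that each allowed operation preserves the two properties, namely the congruence $\langle e_i,e_j\rangle\equiv\delta_{ij}\pmod p$ and Polishchuk-obstructedness of every basis vector. The conclusion then follows from Theorem~\ref{thm:polishchuk-special}. The congruence part has already been done. Lemma~\ref{lem:mutation-rank} shows that mutations preserve it. A sign change multiplies a row and a column of the Gram matrix by $-1$, and an isometry leaves the Gram matrix unchanged, exactly as in the proof of Proposition~\ref{prop:rank-invariant}. So the real content is the second property.

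For mutations, I would use the formula from the proof of Lemma~\ref{lem:mutation-rank}. The right mutation of $(e_i,e_{i+1})$ produces $(e_{i+1},f)$ with
\[
f=e_i-me_{i+1},\qquad m=\langle e_i,e_{i+1}\rangle\equiv0\pmod p.
\]
Hence $f\equiv e_i$ in $K_0(X)\otimes\bF_p$. Polishchuk-obstructedness depends only on the reduction modulo $p$, so $f$ is obstructed because $e_i$ is. The vector $e_{i+1}$ is unchanged. For the left mutation, the new vector is $e_{i+1}-me_i\equiv e_{i+1}\pmod p$, and the same argument applies. Since the congruence hypothesis is preserved at every step, $m\equiv0$ holds at every step, so induction along any sequence of operations is legitimate.

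For sign changes, the set $\mathcal L_p$ is symmetric under $x\mapsto -x$. Therefore $\bar x\notin\mathcal L_p$ if and only if $-\bar x\notin\mathcal L_p$. For an isometry $g$ whose reduction $\bar g$ preserves $\mathcal L_p$, I note that $\bar g$ is a bijection of the finite set $K_0(X)\otimes\bF_p$, since $g$ is a lattice automorphism. A bijection that maps the finite set $\mathcal L_p$ into itself maps it onto itself, so $\bar g^{-1}$ also preserves $\mathcal L_p$. Thus $\bar g$ preserves the complement of $\mathcal L_p$, and $ge_i$ is obstructed whenever $e_i$ is.

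Finally, every vector of every basis in the orbit is Polishchuk-obstructed. By Theorem~\ref{thm:polishchuk-special}, none of them is the class of an exceptional object. The main point needing care is the isometry case: I must justify that $\bar g$ preserves the complement and not merely $\mathcal L_p$ itself. The finiteness and bijectivity argument above settles this. Everything else reduces to the observation that mutations change vectors only by multiples of $p$.
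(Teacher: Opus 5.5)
Your proposal is correct and follows essentially the same route as the paper. A mutation changes a vector only by a multiple of $p$, so the reductions modulo $p$ are merely permuted; sign changes preserve the symmetric set $\mathcal L_p$; the congruence $\langle e_i,e_j\rangle\equiv\delta_{ij}\pmod p$ is carried along by Lemma~\ref{lem:mutation-rank}; and Theorem~\ref{thm:polishchuk-special} gives the conclusion. Your finiteness and bijectivity argument, showing that $\bar g$ also preserves the complement of $\mathcal L_p$, usefully fills in a step the paper only asserts.
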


\begin{proof}
Consider a pair $(e_i,e_{i+1})$ and put
$m=\langle e_i,e_{i+1}\rangle$. By assumption, $m\equiv0\pmod p$. The right
numerical mutation replaces this pair by
\[
(e_{i+1},\, e_i-m e_{i+1});
\]
the left mutation is similar. Since $m$ is divisible by $p$, the new pair
coincides with $(e_{i+1},e_i)$ modulo $p$ up to the sign convention. Therefore
a mutation only permutes the reductions of the basis vectors modulo $p$. In
particular, if the reductions of all basis vectors lie outside $\mathcal L_p$
before the mutation, then the same is true after the mutation. The proof of
Lemma~\ref{lem:mutation-rank} shows that the congruence
$\langle e_i,e_j\rangle\equiv\delta_{ij}\pmod p$ is also preserved; therefore
the argument can be repeated after each subsequent mutation.

A sign change replaces a reduction by its negative and preserves both
$\mathcal L_p$ and its complement. An isometry preserving $\mathcal L_p$
modulo $p$ also preserves the complement of $\mathcal L_p$. Induction on the
length of the sequence of operations proves the first two assertions; the last
assertion then follows from Theorem~\ref{thm:polishchuk-special}.
\end{proof}

\begin{remark}
The assumption on isometries is essential. Polishchuk's theorem singles out
the set of classes of line bundles modulo $p$. Mutations and sign changes
automatically preserve the corresponding obstruction provided the Gram matrix
$B$ satisfies $B\equiv I\pmod p$. However, an arbitrary integral isometry of
the Euler lattice need not preserve $\mathcal L_p$. For this reason,
Proposition~\ref{prop:nonliftability-stable} is stated for the orbit under
mutations and sign changes or for its extension by isometries preserving
$\mathcal L_p$.
\end{remark}

For the matrix $P$ of Section~\ref{sec:p4}, reduction modulo $5$ gives
\[
P\equiv
\begin{pmatrix}
1&2&1&2&1\\
1&3&4&1&2\\
3&4&4&2&1\\
4&4&3&1&3\\
2&1&3&1&1
\end{pmatrix}
\pmod5.
\]
No column of this matrix has the form $\pm e_i$. Therefore none of the vectors
$f_0,\dots,f_4$ is the class of an exceptional object in
$D^b(\operatorname{Coh}\bP^4)$. This is stronger than the rank obstruction
alone. For example, the ranks of $f_0$ and $f_1$ are congruent to $\pm1$
modulo $5$; however, the reductions of $f_0$ and $f_1$ modulo $5$ do not belong
to $\mathcal L_5$, i.e., these reductions are not classes of line bundles, even
up to sign.

For the matrix $P_6$ of Section~\ref{sec:p6}, reduction modulo $7$ gives
\[
P_6\equiv
\begin{pmatrix}
3&1&0&2&0&5&5\\
4&2&4&2&0&6&3\\
6&5&4&2&4&0&3\\
6&0&5&1&1&5&5\\
2&3&0&1&5&2&0\\
1&3&0&5&5&4&4\\
5&4&0&2&5&0&1
\end{pmatrix}
\pmod7.
\]
Again, no column has the form $\pm e_i$. Hence none of the vectors
$f_0,\dots,f_6$ is the class of an exceptional object in
$D^b(\operatorname{Coh}\bP^6)$; this includes the vectors with ranks congruent
to $\pm1$ modulo $7$.

Thus the mutually orthogonal pairs in the numerical basis for $\bP^4$ are
``ghost'' pairs: these pairs exist in the Euler lattice $K_0$ but do not lift
to pairs of exceptional objects in the derived category. Moreover, by
Proposition~\ref{prop:nonliftability-stable}, every vector of every basis
obtained from these bases by mutations and sign changes is also unliftable; the
same is true after applying isometries of the Euler form preserving
$\mathcal L_p$.

\section{Absence of mutually orthogonal exceptional objects on
$\bP^{p-1}$}\label{sec:orthogonal}

We now prove the categorical counterpart of the preceding numerical
observation.

\begin{theorem}\label{thm:no-orthogonal-exceptional}
Let $p$ be a prime and let $X=\bP^{p-1}_{\mathbb C}$. Then
$D^b(\operatorname{Coh}X)$ does not contain a pair of mutually orthogonal
exceptional objects, i.e., there are no exceptional objects $E,F$ such that
\[
\Ext^\bullet(E,F)=0,
\qquad
\Ext^\bullet(F,E)=0.
\]
\end{theorem}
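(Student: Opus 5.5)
Suppose $E,F$ are exceptional with $\Ext^\bullet(E,F)=\Ext^\bullet(F,E)=0$. Then $\chi(E,F)=\chi(F,E)=0$. By Theorem~\ref{thm:polishchuk-special}, there are signs $\varepsilon,\varepsilon'\in\{\pm1\}$ and residues $a,b\in\bZ/p\bZ$ with
\[
[E]\equiv\varepsilon[\cO(a)],\qquad [F]\equiv\varepsilon'[\cO(b)]\pmod p .
\]
We may choose representatives $0\le a,b\le p-1$. Then the reductions of $[E]$ and $[F]$ are $\pm e_a$ and $\pm e_b$ in the standard basis of $K_0(X)\otimes\bF_p$. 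The plan is to reduce the Euler pairings modulo $p$, using Lemma~\ref{lem:std-mod-p}, which says $B_{\mathrm{std}}\equiv I\pmod p$. This gives
\[
\chi(E,F)\equiv\varepsilon\varepsilon'\,\delta_{ab}\pmod p .
\]

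First case: $a=b$. Then $\chi(E,F)\equiv\pm1\not\equiv0\pmod p$, which contradicts $\chi(E,F)=0$. So this case is impossible.

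Second case: $a\ne b$. Here the mod $p$ reduction gives no contradiction, because $\langle e_a,e_b\rangle\equiv0$. We need to combine both vanishings. I would use the symmetrized form $\chi(x,y)+\chi(y,x)$, or simply both conditions together. The idea is to show that $\{[E],[F]\}$ can be completed, at least numerically, to a basis with Gram matrix $\equiv I\pmod p$ that lies in the standard orbit. Then Proposition~\ref{prop:no-zero-upper-standard-orbit} would forbid the zero entry.

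A more direct route is to refine Polishchuk's congruence to modulo $p^2$. Write $[E]=\varepsilon t^a+px$ and $[F]=\varepsilon' t^b+py$. Then
\[
\chi(E,F)=\varepsilon\varepsilon'\chi(t^a,t^b)+p\bigl(\varepsilon\chi(t^a,y)+\varepsilon'\chi(x,t^b)\bigr)+p^2\chi(x,y).
\]
By Lemma~\ref{lem:std-mod-p2}, the first term is $\equiv\pm p\,d^{-1}\pmod{p^2}$, where $d\equiv b-a$. The middle term is $p$ times a quantity that is only determined modulo $p$. So the obstacle is to control $\chi(t^a,y)$ and $\chi(x,t^b)$ modulo $p$.

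For this control I would exploit exceptionality: $\chi(E,E)=1$ gives
\[
1=1+2p\varepsilon\,\mathrm{sym}(t^a,x)+O(p^2),
\]
so the symmetric part of $\chi(t^a,x)$ vanishes mod $p$. Similarly for $F$.

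Adding the two conditions $\chi(E,F)=0$ and $\chi(F,E)=0$ gives a symmetrized equation. Note that $\chi(t^b,t^a)=\chi(\cO(a-b))$ with $a-b\in[-(p-1),-1]$, which vanishes, while $\chi(t^a,t^b)\equiv p\,d^{-1}$. Hence
\[
0\equiv p\bigl(\varepsilon\varepsilon' d^{-1}+\text{symmetric cross terms}\bigr)\pmod{p^2}.
\]

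The main obstacle is to show that the symmetric cross terms $\chi_s(t^a,y)+\chi_s(x,t^b)$ vanish mod $p$, or are otherwise forced. This is where more than Polishchuk's mod-$p$ statement is likely needed. I would try to use the Serre operator $S=(-1)^n m_{t^{-p}}\equiv(-1)^n\pmod p$ together with the identity $\chi(x,y)=\chi(y,Sx)$ to relate the asymmetric parts. If that fails, I would apply a twist and mutations (autoequivalences preserve orthogonality) to move $E$ to $a=0$ and reduce to a single-object statement about $F$ in $\langle E\rangle^\perp$.
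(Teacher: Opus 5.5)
The case $a=b$ is correct and matches the paper. The case $a\ne b$, however, is left open, and the route you pursue there cannot close it: the symmetrized form is exactly the wrong combination.

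Write $x=[E]=\varepsilon e_a+pu$ and $y=[F]=\varepsilon' e_b+pv$. Since $B\equiv I\pmod p$, expanding modulo $p^2$ gives
\[
\chi(x,y)\equiv \varepsilon\varepsilon' B_{ab}+p(\varepsilon v_a+\varepsilon' u_b),
\qquad
\chi(y,x)\equiv \varepsilon\varepsilon' B_{ba}+p(\varepsilon v_a+\varepsilon' u_b)
\pmod{p^2}.
\]
So the uncontrolled cross terms are the \emph{same} in both pairings.

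In $\chi(x,y)+\chi(y,x)$ they double, and nothing available controls them. Polishchuk's theorem says nothing about $u,v$ modulo $p$. Exceptionality of $E$ only gives $u_a\equiv0$ (for odd $p$), not $u_b$.

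In the difference they cancel. We have $\chi(x,y)-\chi(y,x)=x^t(B-B^t)y$ with $B-B^t\equiv0\pmod p$, so every term involving $pu$ or $pv$ is divisible by $p^2$. By Lemma~\ref{lem:std-mod-p2},
\[
\chi(x,y)-\chi(y,x)\equiv\varepsilon\varepsilon'(B_{ab}-B_{ba})\equiv\varepsilon\varepsilon'\,p\,(b-a)^{-1}\not\equiv0\pmod{p^2},
\]
where $(b-a)^{-1}$ is an inverse of $b-a$ modulo $p$. This contradicts $\chi(x,y)=\chi(y,x)=0$, and it is the paper's proof. In particular, your assertion that ``more than Polishchuk's mod-$p$ statement is likely needed'' is false.

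Your Serre-operator idea would have led to the same place had you carried it out. We have $\chi(x,y)-\chi(y,x)=\chi(y,(S-1)x)$, and $S\equiv1\pmod p$ on $K_0(\bP^{p-1})$. Hence $(S-1)x\equiv\varepsilon(S-1)e_a\pmod{p^2}$, which again removes the cross terms.

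The other routes you sketch do not work as stated either. Completing $\{[E],[F]\}$ to a basis ``in the standard orbit'' and invoking Proposition~\ref{prop:no-zero-upper-standard-orbit} has no justification: nothing forces such a completion, if one even exists, into the standard orbit. Indeed, the $\bP^4$ basis of Section~\ref{sec:p4} has Gram matrix $\equiv I\pmod 5$, lies outside the standard orbit, and contains mutually orthogonal pairs. Your final fallback (twisting and mutating $E$ to $a=0$ and reducing to a statement about $F$) is too vague to count as an argument.
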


\begin{proof}
Put
\[
e_i=[\cO(i)],
\qquad i=0,\dots,p-1.
\]
These vectors form a basis of $K_0(\bP^{p-1})$. The matrix of the Euler form in
this basis is
\[
B_{ij}=\chi(e_i,e_j)=
\begin{cases}
\binom{p-1+j-i}{p-1}, & i\le j,\\
0, & i>j.
\end{cases}
\]
By Lemma~\ref{lem:std-mod-p2}, we have
\[
B=I+pN_1,
\]
where all entries of $N_1$ below the diagonal are zero and
\[
(N_1)_{a,a+d}\equiv d^{-1}\pmod p,
\qquad d=1,\dots,p-1.
\]
Therefore,
\[
(N_1)_{ab}-(N_1)_{ba}\not\equiv0\pmod p
\qquad\text{whenever } a\ne b.
\]

Assume the converse: let $E$ and $F$ be mutually orthogonal exceptional
objects. Put $x=[E]$ and $y=[F]$; then
\[
\chi(x,y)=0,
\qquad
\chi(y,x)=0.
\]
By Theorem~\ref{thm:polishchuk-special}, there exist
$a,b\in\{0,\dots,p-1\}$, signs $\varepsilon,\delta\in\{\pm1\}$, and elements
$u,v\in K_0(\bP^{p-1})$ such that
\[
x=\varepsilon e_a+p u,
\qquad
y=\delta e_b+p v.
\]
Since $B\equiv I\pmod p$, we have
\[
\chi(x,y)\equiv \varepsilon\delta\,\delta_{ab}\pmod p.
\]
If $a=b$, then the right-hand side is nonzero modulo $p$; this contradicts the
equality $\chi(x,y)=0$. Therefore $a\ne b$.

Now consider the skew-symmetric part of the Euler form. We have
\[
B-B^t=p(N_1-N_1^t);
\]
hence,
\[
\chi(x,y)-\chi(y,x)
=x^t(B-B^t)y
\equiv
p\varepsilon\delta\, e_a^t(N_1-N_1^t)e_b
\pmod {p^2},
\]
i.e.,
\[
\chi(x,y)-\chi(y,x)
\equiv
p\varepsilon\delta\bigl((N_1)_{ab}-(N_1)_{ba}\bigr)
\pmod {p^2}.
\]
Since $a\ne b$, the factor $(N_1)_{ab}-(N_1)_{ba}$ is nonzero modulo $p$. Thus
the right-hand side is nonzero modulo $p^2$. On the other hand,
$\chi(x,y)-\chi(y,x)=0$. This contradiction proves the theorem.
\end{proof}

\section{Limits of applicability of the argument modulo $p^2$}\label{sec:boundary}

Polishchuk's theorem also applies to
\[
X=\bP^{p^r-1}_{\mathbb C},
\qquad r>1:
\]
modulo $p$, the class of an exceptional object still coincides with the class
of a line bundle up to sign. However, the proof of
Theorem~\ref{thm:no-orthogonal-exceptional} uses a stronger property specific
to the case $r=1$.

Let $n=p^r$ and let $X=\bP^{n-1}_{\mathbb C}$. Then
\[
\chi(\cO(a),\cO(a+1))
=
\binom{n}{n-1}
=n
=p^r.
\]
If $r=1$, then $p^r/p=1$ is nonzero modulo $p$. If $r>1$, then
\[
\frac{p^r}{p}=p^{r-1}\equiv0\pmod p.
\]
Hence the form
\[
\frac{\chi(x,y)-\chi(y,x)}{p}\pmod p
\]
no longer distinguishes the line bundles $\cO(a)$ and $\cO(a+1)$.
More generally, for $1\le d\le p^r-1$ we have
\[
v_p\!\left(\binom{p^r-1+d}{p^r-1}\right)=r-v_p(d).
\]
Indeed,
\[
\binom{p^r-1+d}{p^r-1}
=
\frac{p^r}{d}\prod_{j=1}^{d-1}\frac{p^r+j}{j}.
\]
Each factor of this product is a $p$-adic unit because $v_p(p^r+j)=v_p(j)$ for
$1\le j<p^r$. Therefore the skew-symmetric part may first appear at a higher
$p$-adic level. Consequently, the congruence
\[
[E]\equiv\pm[\cO(m)]\pmod p
\]
alone does not suffice to extend the above proof to all
$\bP^{p^r-1}$ with $r>1$; such an extension requires additional $p$-adic
information about the class $[E]$.

\end{document}